\newcommand{\Pp}{{\mathbb P}}
\newcommand{\OO}{{\mathcal O}}
\newcommand{\Xc}{{\mathcal X}}
\newtheorem{thm}{Theorem}[section]
\begin{document}

%\articletype{ARTICLE TEMPLATE}% Specify the article type or omit as appropriate

\title{A note on the paper "ON CURVES WITH SPLIT JACOBIANS"}
\author{Sajad Salami \\
	\\
	Inst\'{i}tuto de Matem\'{a}tica e Estat\'{i}stica \\
	Universidade Estadual do Rio do Janeiro, Brazil\\
	Email: sajad.salami@ime.uerj.br}
%\date{}
\maketitle

\begin{abstract}
	In \cite{Yamau}, without giving a detailed proof, Yamauchi provided a formula to calculate the genus of a certain family of smooth complete intersection algebraic curves. That formula is used extensively in \cite{BSHSH} to study on the  algebraic curves for which their Jacobian has superelliptic components.
In this note, we determine  the correct version of the genus formula  with an algebraic proof.
Then,  	we   show that the  formula given in \cite{Yamau} works only under certain conditions.	
\end{abstract}

%\begin{keywords}
%	Arithmetic genus; complete intersection curve
%\end{keywords}

\section{Introduction and main result}

Let $ r \geq 2$ and $ s \geq 1$ be arbitrary integers. 
Let $k$ be a field and denote by ${\bar k}$ an algebraically closed field containing $k$.
Fix a system of coordinates
$x, y_1, \cdots, y_s, w, z$ on the projective space $\Pp_{\bar k}^{s+2}$.
Let $\Xc_{r,s}$  denotes the algebraic curve  defined over $k$  in  $\Pp_{\bar k}^{s+2}$ by the following equations,
\begin{equation}
\label{eq1}
\begin{split}
zw & = c_0x^2+c_1xw+c_2w^2,\\
y_1^r& =h_1(z,w):=z^r+c_{1,1}z^{r-1}w+ \cdots + c_{r-1,1}zw^{r-1}+w^r,\\
\vdots & \vdots\\
y_s^r& =h_1(z,w):=z^r+c_{1,s}z^{r-1}w+ \cdots + c_{r-1,s}zw^{r-1}+w^r,  
\end{split}
\end{equation}
where $c_\ell\in k$, $\ell=0,1,2,$ and $c_{i,j}\in k$ for $i=1,\cdots, r$, and $j=1,\cdots, s$.

Without giving a comprehensive proof, in Proposition 4.1 (i) of \cite{Yamau}, Yamauchi stated that
the  genus  $g(\Xc_{r,s})$ of the curve $\Xc_{r,s}$ is equal to
$(r - 1)(r s\cdot 2^{s-1} - 2^s + 1)$ if it is smooth and $c_0 \not = 0$.
His  genus  formula  is used extensively  in Section 4 of  \cite{BSHSH}  to provide an   necessary and sufficient condition in terms of $r$ and $s$ such that the  Jacobian of  $g(\Xc_{r,s})$ to decompose as Jacobian of superelliptic curves.

We note that  the Jacobian matrix of $\Xc_{r,s}$ can not have a full rank if the characteristic of $k$  divides  $2 r$. 
Thus, in this note,   we assume that  $k$ is a field of characteristic not dividing  $2 r$.
we shall to provide a correct formula for the genus of  $\Xc_{r,s}$ as stated in the following theorem. 

\begin{thm}
	Assume that the curve $\Xc_{r,s}$ is smooth and   $c_0 \not = 0$. Then, its  genus is equal to 
	\begin{equation}
	\label{eq2}
	g(\Xc_{r,s})= 1+ r^s(s(r-1) -1).
	\end{equation}
\end{thm}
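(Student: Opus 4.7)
The plan is to realize $\Xc_{r,s}$ as a tame abelian cover of $\Pp^1$ and then apply Riemann--Hurwitz. Since $c_0 \neq 0$, the conic $C_0 \subset \Pp^2_{x,z,w}$ defined by $zw = c_0 x^2 + c_1 xw + c_2 w^2$ is smooth, so $C_0 \cong \Pp^1$ over $\bar k$; indeed, setting $u = z/w$ and $t = x/w$ gives $u = c_0 t^2 + c_1 t + c_2$, which exhibits the natural projection $\varphi : C_0 \to \Pp^1_u$ as a degree-$2$ map. Forgetting the $y_i$-coordinates then defines a projection $\pi : \Xc_{r,s} \to C_0$ of degree $r^s$; after adjoining $r$-th roots of unity (harmless for the geometric genus), $\pi$ is Galois with group $(\Z/r\Z)^s$, one Kummer factor per equation $y_i^r = h_i(z,w)$, and all ramification is tame thanks to $\ch(k) \nmid 2r$.

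The heart of the argument is to pin down the branch locus of $\pi$. On $C_0$ the function $h_i(z,w)/w^r$ equals the polynomial $H_i(u) := u^r + c_{1,i} u^{r-1} + \cdots + c_{r-1,i} u + 1$ of degree $r$ in $u$, so its zero divisor on $C_0$ is $\varphi^{\ast}(H_i)_0$, of degree $2r$. I would then prove, by a direct Jacobian-matrix calculation, that smoothness of $\Xc_{r,s}$ forces (a) each $H_i$ to have $r$ distinct roots in $\bar k$, (b) none of these roots to be a critical value of $\varphi$, and (c) the divisors of $H_1,\ldots,H_s$ on $C_0$ to be pairwise disjoint. Each of (a)--(c) is exactly what is needed: a double root of $H_i$, a critical value of $\varphi$ coinciding with a root of $H_i$, or a common root of $H_i$ and $H_j$ for $i \neq j$, each produces a point of $\Xc_{r,s}$ at which the Jacobian matrix of the defining equations drops rank; the hypothesis $\ch(k) \nmid 2r$ enters here, since otherwise the $r y_i^{r-1}$ and $2 c_0 x + c_1 w$ factors could vanish spuriously.

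Granted (a)--(c), the branch locus of $\pi$ consists of exactly $2rs$ distinct points of $C_0$, namely $2r$ points coming from each $H_i$. Over such a branch point the inertia group is the $\Z/r\Z$-factor corresponding to the relevant index $i$, so the fiber contains $r^{s-1}$ ramified points, each of ramification index $r$, contributing $r^{s-1}(r-1)$ to the degree of the ramification divisor. Summing and feeding the result into Riemann--Hurwitz,
\begin{equation*}
2 g(\Xc_{r,s}) - 2 \;=\; r^s (-2) \;+\; 2 r s \cdot r^{s-1}(r-1) \;=\; 2 r^s \bigl( s(r-1) - 1 \bigr),
\end{equation*}
which rearranges to the stated formula. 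The main obstacle is the smoothness-implies-generic-branching step (a)--(c); once this is in place, the rest is routine Riemann--Hurwitz bookkeeping.
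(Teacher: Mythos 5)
Your argument reaches the correct formula, but by a genuinely different route from the paper. The paper's proof is a three-line application of standard facts about complete intersections: by adjunction (Hartshorne, Exercise II.8.4(e)) the canonical sheaf of $\Xc_{r,s}$ is $\OO(2+rs-(s+2)-1)=\OO(s(r-1)-1)$, by B\'ezout $\deg\Xc_{r,s}=2r^s$, and $\deg\omega_{\Xc_{r,s}}=2g-2$ finishes it. You instead fibre $\Xc_{r,s}$ over the conic $C_0\cong\Pp^1$ as a $(\Z/r\Z)^s$-Kummer cover and run Riemann--Hurwitz. Your bookkeeping is right: since $h_i(1,0)=1$, the point $[0:1:0]$ where $w=0$ is unramified (the pole order of $h_i/w^r$ there is $2r$, divisible by $r$), and granting (a)--(c) there are $2rs$ tame branch points, each with $r^{s-1}$ preimages of index $r$, so $2g-2=-2r^s+2rs\cdot r^{s-1}(r-1)=2r^s(s(r-1)-1)$ as claimed. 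What your approach buys is the explicit Galois and ramification data of $\Xc_{r,s}\to\Pp^1$, which is precisely what the Jacobian-decomposition applications in the cited papers need; what it costs is that the entire weight of the proof lands on step (a)--(c), which you only promise to carry out, whereas the paper's argument needs no smoothness analysis beyond the hypothesis itself.

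Two things should be filled in before this counts as a complete proof. First, (a)--(c) are true and the verifications are short local Jacobian computations --- a point where $y_i^r=t^2\cdot(\text{unit})$ (double root, or simple root at a critical value of $\varphi$) fails the Jacobian criterion for every $r\ge 2$, and at a common root of $H_i$ and $H_j$ the two rows $(0,\dots,ry_i^{r-1},\dots,0,\ast)$ and $(0,\dots,ry_j^{r-1},\dots,0,\ast)$ both degenerate to multiples of the same vector --- but they must actually be written out, and this is where $\ch(k)\nmid 2r$ enters. Second, Riemann--Hurwitz as you apply it presumes $\Xc_{r,s}$ is connected; this does follow from (a)--(c), because a relation $\prod_i f_i^{a_i}\in (k(C_0)^\times)^r$ forces $r\mid a_i$ for every $i$ by looking at the reduced and pairwise disjoint zero divisors of the $f_i=h_i/w^r$, so the function field genuinely has degree $r^s$ over $k(C_0)$ --- but you should say so. (The paper gets connectedness for free, since a smooth complete intersection of positive dimension is connected.)
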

%------------------------
\begin{proof}
	Let $\omega_{\Xc_{r,s}}$  be  the canonical sheaf of $\Xc_{r,s}$.
	By the exercise (II.8.4.e) in \cite{hart}, it is isomorphic to 
	$ \OO (rs+2-(s+2)-1)=\OO(s(r-1) -1).$
	Hence, we have
	$$\deg (\omega_{\Xc_{r,s}})= (s(r-1) -1) \cdot \deg(\Xc_{r,s}).$$
	Using the classical version of Bezout's theorem, see Proposition 8.4 in \cite{fulton1} or Example 1 in page 198 of \cite{shaf1},
	the degree of  $\Xc_{r,s}$ over   $\bar k$ is equal to $\deg(\Xc_{r,s})=2\cdot r^s$.
	Thus, we get that  $\deg (\omega_{\Xc_{r,s}})= 2\cdot r^s\cdot (s(r-1) -1).$
	As a consequence of the Riemann-Roch theorem, 	it is well known that the degree of canonical sheaf of 
	any algebraic curve of genus $g$ is equal to $2g-2$.
	For instance, see example 1.3.3 in chapter IV of \cite{hart}.
	Therefore, we have $2g(\Xc_{r,s}) -2 =2\cdot r^s\cdot(s(r-1) -1)$ that leads to the desired formula.
\end{proof}

We remark that the genus formula given in \cite{Yamau} and here coincide  in two cases,
say when $r=2$ and any $s\geq 1$ as well as
$r\geq 2$ and $s=1$. Hence,  Theorem 4.2 in  \cite{Yamau}  and  Theorem 4.3 in  \cite{BSHSH}  and its consequences are true only in the above mentioned two cases and are wrong in other cases.

\end{document}